\title{A compactness theorem for four-dimensional shrinking gradient Ricci solitons}
\author{Yongjia Zhang}
\numberwithin{equation}{section}
\begin{document}
\maketitle

In \cite{haslhofer2011compactness} and \cite{haslhofer2015note}, Haslhofer and M{\"u}ller proved a compactness theorem for four-dimensional shrinking gradient Ricci solitons, with the only assumption being that the entropy is uniformly bounded from below. However, the limit in their result could possibly be an orbifold Ricci shrinker. In this paper we prove a compactness theorem for noncompact four-dimensional shrinking gradient Ricci solitons with a topological restriction and a noncollapsing assumption, that is, we consider Ricci shrinkers that can be embedded in a closed four-manifold with vanishing second homology group and are strongly $\kappa$-noncollapsed with respect to a universal $\kappa$. In particular, we do not need any curvature assumption and the limit is still a smooth nonflat shrinking gradient Ricci soliton.

\tableofcontents

\section{Introduction}

\newtheorem{Definition_1}{Definition}[section]
\newtheorem{Definition_2}[Definition_1]{Definition}
\newtheorem{Main_Theorem}[Definition_1]{Theorem}
\newtheorem{Main_Corollary}[Definition_1]{Corollary}

A triple $(M^n,g,f)$, where $(M^n,g)$ is a Riemannian manifold and $f$ is a function on $M^n$, is called a shrinking gradient Ricci soliton, or Ricci shrinker for short, if
\begin{eqnarray*}
Ric+\nabla^2 f=\frac{\lambda}{2}g,
\end{eqnarray*}
where $\lambda>0$ is a constant. By scaling the metric and adding a constant to the potential function $f$, the Ricci shrinker can always be normalized in the following way:
\begin{eqnarray}\label{eq:normalization}
Ric+\nabla^2f&=&\frac{1}{2}g,
\\\nonumber
|\nabla f|^2+R&=&f.
\end{eqnarray}
Shrinking gradient Ricci solitons are of great interest in the study of the singularity formation for the Ricci flow. For instance, they arise as fixed-point blow-up limits of Type I Ricci flows \cite{enders2011type}, and they are also blow-down limits of ancient solutions with nonnegative curvature operator \cite{perelman2002entropy}. For other works on the Ricci shrinkers, please refer to \cite{li2016four}, \cite{munteanu2014conical}, \cite{munteanu2015geometry}, \cite{munteanu2015positively}, \cite{munteanu2016structure}, and \cite{naber2010noncompact}. In this paper, we restrict our attention to the shrinking gradient Ricci solitons that can be embedded in a closed four-manifold with vanishing second homology group. This condition is also considered by Bamler and Zhang \cite{bamler2015heat}. Besides that, we impose a uniform strong noncollapsing assumption, which fortunately holds for singularity models; see below. We define the following space of Ricci shrinkers.
\\

\begin{Definition_1} \label{definition}
Given $\kappa>0$, $\mathcal{M}^4(\kappa)$ is the collection of all the four-dimensional noncompact shrinking gradient Ricci solitons $(M^4,g,f,p)$, where $p$ is the point at which $f$ attains its minimum, satisfying
\begin{enumerate}[(a)]
  \item $(M^4,g)$ is nonflat;
  \item $M^4$ can be embedded in a closed four-manifold $N^4$ with $H_2(N)=0$, where $H_2$ is the second homology group with coefficients in $\mathbb{Z}$;
  \item $(M^4,g)$ is strongly $\kappa$-noncollapsed on all scales.
\end{enumerate}
\end{Definition_1}

\bigskip

Notice that in item (b) above the closed manifold $N^4$ may depend on $M$; we do not assume that every Ricci shrinker in $\mathcal{M}^4(\kappa)$ can be embedded in the same closed four-manifold. By strong noncollapsing we mean the following.
\\

\begin{Definition_2}
A Riemannian manifold $(M^n,g)$ is strongly $\kappa$-noncollapsed on all scales, where $\kappa>0$, if the following holds. For all $x\in M$ and $r>0$, if $R<r^{-2}$ on $B(x,r)$, then $\operatorname{Vol}(B(x,r))\geq\kappa r^n$. Here we use $R$ to denote the scalar curvature.
\end{Definition_2}

\bigskip

Our main theorem is the following.
\\

\begin{Main_Theorem} \label{Main_Theorem}
$\mathcal{M}^4(\kappa)$ is compact in the smooth pointed Cheeger-Gromov sense, where each $(M^4,g,f,p)\in\mathcal{M}^4(\kappa)$ is normalized as in (\ref{eq:normalization}).
\end{Main_Theorem}

\bigskip

Here by compact we mean that for every sequence $\{(M^4_k,g_k,f_k,p_k)\}_{k=1}^\infty$ that is contained in $\mathcal{M}^4(\kappa)$, there exists a subsequence that converges in the pointed smooth Cheeger-Gromov sense to a Ricci shrinker $(M^4_\infty,g_\infty,f_\infty,p_\infty)\in \mathcal{M}^4(\kappa)$.
\\

A homotopy four-sphere, as a particular example, has vanishing second homology group. When approaching the four-dimensional smooth Poincar\'{e} conjecture using the Ricci flow, the Ricci shrinkers that can be embedded in a homotopy four-sphere are the only ones that may arise when analyzing the singularities at the first singular time, and are the only ones that can be embedded in the smooth part of the post-surgery objects\footnote{Ricci flow with surgery on closed four-manifolds may possibly produce orbifolds. For instance, the surgery on $\mathbb{S}^4$ at a pinching neck over $\mathbb{S}^3/\mathbb{Q}_8$ produces two orbifolds, where $\mathbb{Q}_8$ stands for the quaternion group. We would like to thank Professor Bennett Chow for pointing this out.}. The strong noncollapsing assumption is also motivated by the finite-time singularity analysis for the Ricci flow. Suppose $(M,g(t))_{t\in[0,T)}$ is a Ricci flow on a closed manifold which develops a singularity at time $T$, then according to Perelman \cite{perelman2002entropy}, $(M,g(t))_{t\in[0,T)}$ is strongly $\kappa$-noncollapsed on some scale $r_0$, where both $\kappa$ and $r_0$ depend only on $g(0)$ and $T$ (see Theorem 6.74 in \cite{chow2007ricci}). Thus any blow-up limit of $(M,g(t))_{t\in[0,T)}$ must be strongly $\kappa$-noncollapsed on all scales. We hope our result will be helpful to the finite-time singularity analysis for the Ricci flow on four-dimensional closed manifolds. As a complement, we would like to mention that, as pointed out by Bennett Chow, it is interesting to ask whether or not the curvature of every four-dimensional nontrivial noncollapsed steady gradient Ricci soliton has some sense of positivity, the validity of which should facilitate the study of Type II singularities in the sense of Hamilton \cite{hamilton1995formation}.
\\

The condition of being embedded in a closed four-manifold with vanishing second homology group plays a very important role in ruling out the Ricci-flat limits. By the strong noncollapsing property, a Ricci-flat blow-up limit of a Ricci flow at a finite-time singularity must have Euclidean volume growth, and, according to Cheeger and Naber \cite{cheeger2015regularity}, must be \textit{asymptotically locally Euclidean} (ALE for short), which cannot be embedded in any closed four-manifold with vanishing second homology group (see Corollary 5.8 in Anderson \cite{anderson2010survey}; an alternative proof by Richard Bamler is provided in section 6). This idea gives a uniform curvature growth estimate for every element in the space $\mathcal{M}^4(\kappa)$; see Theorem \ref{BCBD} below, from which we obtain the compactness result. This argument is in the spirit of Perelman's bounded curvature at bounded distance result for $\kappa$-solutions with nonnegative curvature operator (see section 11 of \cite{perelman2002entropy}), where Perelamn also assumes a uniform $\kappa$, which is motivated by the same reason that all these $\kappa$-solutions arise from the same Ricci flow that forms a finite-time singularity. However, there is always a universal $\kappa$ for all the three-dimensional $\kappa$-solutions that is not a shrinking space form because of the classification of three-dimensional Ricci shrinkers.
\\

From the proof of the main theorem, we also get the following property of the space $\mathcal{M}^4(\kappa)$.
\\

\begin{Main_Corollary} \label{Main_Corollary}
There exist $C_1>0$, $C_2>0$, and $C_3<\infty$ depending only on $\kappa$, such that for every $(M^4,g,f,p)\in\mathcal{M}^4(\kappa)$ the following hold.
\begin{enumerate}[(a)]
  \item $R(p)>C_1$.
  \item $R(x)>C_2f^{-1}(x)$, for all $x\in M$.
  \item $|\pi_1(M)|<C_3$, where $\pi_1(M)$ is the fundamental group of $M$.
\end{enumerate}
\end{Main_Corollary}

\bigskip

It is worth mentioning that in Haslhofer and M{\"u}ller \cite{haslhofer2011compactness} \cite{haslhofer2015note}, they also proved a compactness theorem for four-dimensional Ricci shrinkers, where they only assume a uniform lower bound of the entropy, but the limit could possibly be an orbifold shrinker. In comparison with their result, the strong noncollapsing assumption in our theorem is correspondent to their bounded entropy assumption (indeed, it is clear that a uniform lower bound of entropy implies $\kappa$-noncollapsing with respect to a universal $\kappa$; see Carrillo and Ni \cite{carrillo2009sharp} and Yokota \cite{yokota2012addendum}, yet we do not know how it is related to our strong noncollapsing assumption); in addition, we have a topological restriction. What is new in our work is that the orbifold Ricci shrinkers will never show up as the limits.
\\

This paper is organized as follows. In section 2 we collect some known results for the Ricci shrinkers, which are used in our arguments. In section 3 we carry out some a priori estimates. In section 4 we prove a bounded curvature at bounded distance theorem for the Ricci shrinkers in $\mathcal{M}^4(\kappa)$. In section 5 we prove Theorem \ref{Main_Theorem} and Corollary \ref{Main_Corollary}. In section 6 we provide an alternative proof of Anderson's theorem \cite{anderson2010survey}.
\\

\section{Preliminaries}
\newtheorem{Curvature_Evolution}{Proposition}[section]
\newtheorem{Potential_Growth}[Curvature_Evolution]{Proposition}
\newtheorem{Volume_Growth}[Curvature_Evolution]{Proposition}
\newtheorem{Sobolev_Inequality}[Curvature_Evolution]{Proposition}
\newtheorem{Gap_Theorem}[Curvature_Evolution]{Proposition}

In this section we collect some well-known results that are used in our proof. Notice that in this paper the Ricci shrinker equation that we work with may not be normalized as (\ref{eq:normalization}), since sometimes scaling is necessary. Hence we will specify the Ricci shrinker equation in every statement. We start with the following differential equations for the geometric quantities on the Ricci shrinkers.
\\

\begin{Curvature_Evolution}
Let $(M,g,f)$ be a shrinking gradient Ricci soliton satisfying
\begin{eqnarray*}
Ric+\nabla^2f=\frac{\lambda}{2}g,
\end{eqnarray*}
where $\lambda>0$. Then the following hold.
\begin{eqnarray}
\Delta_f R &=& \lambda R-2|Ric|^2, \label{eq:R_evolve}
\\
\Delta_f Ric &=& \lambda Ric + Rm*Ric, \label{eq:Ric_evolve}
\\
\Delta_f Rm &=& \lambda Rm + Rm*Rm, \label{eq:Rm_evolve}
\\
\Delta_f \nabla^k Rm &=& (\frac{k}{2}+1)\lambda \nabla^k Rm + \sum_{j=0}^k\nabla^jRm*\nabla^{k-j}Rm, \label{eq:CoRm_evolve}
\end{eqnarray}
where $*$ stands for some contraction and $\Delta_f=\Delta-\langle\nabla f,\nabla\cdot\ \rangle$ is the $f$-Laplacian operator.
\end{Curvature_Evolution}

\begin{proof}
Since every reference on these differential equations we can find deals only with the case $\lambda=1$, we take (\ref{eq:Rm_evolve}) as an example to quickly sketch how these formulae can be carried out; other equations can be proved in the same way. Recall that the canonical form of a Ricci shrinker $g(t)=\tau(t)\varphi_t^*(g)$ evolves by the Ricci flow (see Theorem 4.1 of \cite{chow2006hamilton}), where
\begin{eqnarray*}
\tau(t)&=&1-\lambda t,
\\
\frac{d}{dt}\varphi_t&=&\frac{1}{\tau}\nabla f\circ\varphi_t,
\\
\varphi_0&=&id.
\end{eqnarray*}

If we regard $Rm$ as a $(4,0)$ tensor, then we have $Rm(g(t))=\tau(t) Rm(\varphi_t^*(g))=\tau(t)\varphi_t^*(Rm)$. Hence by the standard curvature evolution equation (see Theorem 7.1 of \cite{hamilton1982three}) we have
\begin{eqnarray*}
\Delta Rm+Rm*Rm=\left.\frac{\partial}{\partial t}\right\vert_{t=0}Rm(g(t))=-\lambda Rm + \mathcal{L}_{\nabla f}Rm,
\end{eqnarray*}
where $\mathcal{L}$ stands for the Lie derivative. Let $Y_1$, $Y_2$, $Y_3$, $Y_4$ be four arbitrary vector fields. Then we have
\begin{eqnarray*}
\mathcal{L}_{\nabla f}Rm(Y_1,Y_2,Y_3,Y_4)&=&\nabla_{\nabla f}(Rm(Y_1,Y_2,Y_3,Y_4))-Rm(\mathcal{L}_{\nabla f}Y_1,Y_2,Y_3,Y_4)
\\
&&-Rm(Y_1,\mathcal{L}_{\nabla f}Y_2,Y_3,Y_4)-Rm(Y_1,Y_2,\mathcal{L}_{\nabla f}Y_3,Y_4)
\\
&&-Rm(Y_1,Y_2,Y_3,\mathcal{L}_{\nabla f}Y_4)
\\
&=&\nabla_{\nabla f}Rm(Y_1,Y_2,Y_3,Y_4)+Rm(\nabla_{Y_1}\nabla f,Y_2,Y_3,Y_4)
\\
&&+Rm(Y_1,\nabla_{Y_2}\nabla f,Y_3,Y_4)+Rm(Y_1,Y_2,\nabla_{Y_3}\nabla f,Y_4)
\\
&&+Rm(Y_1,Y_2,Y_3,\nabla_{Y_4}\nabla f).
\end{eqnarray*}
Taking into account that $\displaystyle\nabla^2 f=\frac{\lambda}{2}g-Ric$ we obtain the conclusion.
\end{proof}

\bigskip

The following two propositions for the potential function growth rate and the volume growth rate were proved by Cao and Zhou \cite{cao2010complete}, Munteanu \cite{munteanu2009volume}. We use its sharpened version, whose proof is due to Haslhofer and M\"{u}ller \cite{haslhofer2011compactness}. Besides that, Munteanu and Wang \cite{munteanu2014geometry} proved a volume growth estimate with a better constant.
\\

\begin{Potential_Growth}  \label{Potential_Growth}
Let $(M^n,g,f)$ be a noncompact shrinking gradient Ricci soliton normalized as in (\ref{eq:normalization}). Let $p$ be a point where $f$ attains its minimum. Then the following holds.
\begin{eqnarray} \label{eq:potential}
\frac{1}{4}(d(x,p)-5n)_+^2\leq f(x)\leq \frac{1}{4} (d(x,p)+\sqrt{2n})^2,
\end{eqnarray}
where $u_+:=\max\{u,0\}$ denotes the positive part of a function.
\end{Potential_Growth}

\bigskip

\begin{Volume_Growth}
There exists $C<\infty$ depending only on the dimension $n$, such that under the same assumption of Proposition \ref{Potential_Growth} the following holds.
\begin{eqnarray} \label{eq:volume}
\operatorname{Vol}(B(p,r))\leq Cr^n,
\end{eqnarray}
for all $r>0$.
\end{Volume_Growth}

\bigskip

To locally estimate the Ricci curvature, we need the following local Sobolev inequality, whose constant depends only on the local geometry.
\\

\begin{Sobolev_Inequality} \label{prop_Sobolev_Inequality}
For all $\kappa>0$, there exists $C<\infty$ and $\delta\in(0,2)$, depending only on $\kappa$ and the dimension $n\geq 3$ such that the following holds. Let $(M^n,g)$ be a Riemannian manifold and $x_0\in M$, and assume that $|Rm|\leq2$ on $B(x_0,2)$ and $\operatorname{Vol}(B(x_0,2))\geq\kappa$. Then
\begin{eqnarray} \label{eq:Sobolev}
\|u\|_{L^{\frac{2n}{n-2}}}\leq C \|\nabla u\|_{L^2},
\end{eqnarray}
for all $u\in C_{0}^{\infty}(B(x_0,\delta))$.
\end{Sobolev_Inequality}

\begin{proof}
This is a standard result; for the convenience of the readers we sketch the proof. We follow the lines of reasoning of Lemma 3.2 of Haslhofer and M{\"u}ller \cite{haslhofer2011compactness}. We need only to prove an $L^1$ Sobolev inequality
\begin{eqnarray} \label{eq:Sobolev_L1}
\|u\|_{L^{\frac{n}{n-1}}}\leq C_1 \|\nabla u\|_{L^1},
\end{eqnarray}
for all $u\in C_{0}^{\infty}(B(x_0,\delta))$, where $\delta$ and $C_1$ depend only on $\kappa$ and the dimension $n$. Then (\ref{eq:Sobolev}) follows from (\ref{eq:Sobolev_L1}). Indeed, $C_1$ is equal to the isoperimetric constant of $B(x_0,\delta)$
\begin{eqnarray*}
C_1=C_I=\sup |\Omega|^{\frac{n}{n-1}}/|\partial\Omega|,
\end{eqnarray*}
where the supremum is taken over all the open sets $\Omega\subset\subset B(x_0,\delta)$ with smooth boundary. By a theorem of Croke (Theorem 11 in \cite{croke1980some}), $C_I$ can be estimated by
\begin{eqnarray*}
C_I\leq C(n)\omega^{-\frac{n+1}{n}},
\end{eqnarray*}
where $C(n)$ is a constant depending only on the dimension and $\omega$ is the visibility angle defined by
\begin{eqnarray*}
\omega=\inf_{y\in B(x_0,\delta)}|U_y|/|\mathbb{S}^{n-1}|,
\end{eqnarray*}
where $U_y=\{v\in T_y B(x_0,\delta): |v|=1, \text{the geodesic }\gamma_v\text{ minimizes up to }\partial B(x_0,\delta)\}$.
\\

We restrict $\delta$ in $(0,\frac{1}{2})$ and let $y$ be an arbitrary point in $B(x_0,\delta)$. Let
\begin{eqnarray*}
J(r,\theta)dr\wedge d\theta,\ \bar{J}(r,\theta)dr\wedge d\theta
\end{eqnarray*}
be the volume elements in terms of the polar normal coordinates around the point $y$ and in the hyperbolic space with constant sectional curvature $-2$, respectively. By the relative volume comparison theorem, we have
\begin{eqnarray*}
c_2\kappa-C_3\delta^n&\leq&|B(x_0,1)|-|B(x_0,\delta)|\leq\int_{U_y}\int_0^{1+\delta}J(r,\theta)drd\theta
\\
&\leq&\int_{U_y}\int_0^{1+\delta}\bar{J}(r,\theta)drd\theta\leq C_4|U_y|\left(\frac{3}{2}\right)^n,
\end{eqnarray*}
where $c_2$, $C_3$, and $C_4$ are constants depending only on the dimension $n$. Taking $\displaystyle \delta=\left(\frac{c_2\kappa}{2C_3}\right)^{\frac{1}{n}}$, we have that $|U_y|$ is bounded from below by a constant depending only on $\kappa$ and the dimension $n$, for all $y\in B(x_0,\delta)$, whence follows the conclusion.

\end{proof}

\bigskip

We conclude this section with the following gap theorem of Yokota \cite{yokota2009perelman} \cite{yokota2012addendum}, which is of great importance in proving that the limit shrinker is nonflat.
\\

\begin{Gap_Theorem} \label{Gap}
There exists $\varepsilon>0$ depending only on the dimension $n$ such that the following holds. Let $(M^n,g,f)$ be a shrinking gradient Ricci soliton, which is normalized as in (\ref{eq:normalization}). If
\begin{eqnarray*}
\frac{1}{(4\pi)^{\frac{n}{2}}}\int_M e^{-f}dg>1-\varepsilon,
\end{eqnarray*}
then $(M^n,g,f)$ is the Gaussian shrinker, that is, $(M^n,g)$ is the Euclidian space.
\end{Gap_Theorem}
\bigskip

\section{A priori estimates}

\newtheorem{Shi}{Proposition}[section]
\newtheorem{Ricci_Smallness}[Shi]{Proposition}

The a priori estimates in this section hold for any dimension $n\geq 3$. We start with a localized derivative estimate for the Riemann curvature tensor.
\\

\begin{Shi}\label{Shi}
There exists $C<\infty$ depending only on the dimension $n$ such that the following holds. Let $(M^n,g,f)$ be a shrinking gradient Ricci soliton such that
\begin{eqnarray*}
Ric+\nabla^2 f=\frac{\lambda}{2}g,
\end{eqnarray*}
where $\lambda>0$. Let $x_0\in M$ and $r>0$. If $|Rm|\leq r^{-2}$ and $|\nabla f|\leq r^{-1}$ on $B(x_0,2r)$, then
\begin{eqnarray} \label{eq:1st_deriv}
|\nabla Rm|\leq Cr^{-3}
\end{eqnarray}
on $B(x_0,r)$. More generally, there exist $C_l$ depending only on $l\geq 0$ and the dimension $n$, such that under the above assumptions, it holds that
\begin{eqnarray} \label{eq:2st_deriv}
|\nabla^l Rm|\leq C_l r^{-2-l}
\end{eqnarray}
on $B(x_0,r)$.
\end{Shi}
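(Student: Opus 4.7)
The plan is to adapt Shi's classical localized maximum-principle argument to the elliptic operator $\Delta_f$, using the shrinker evolution equations (\ref{eq:Rm_evolve}) and (\ref{eq:CoRm_evolve}). By rescaling $\tilde g := r^{-2} g$ (which changes the soliton constant from $\lambda$ to $\lambda r^2>0$) we may assume $r=1$, so the hypotheses become $|Rm|\leq 1$ and $|\nabla f|\leq 1$ on $B(x_0,2)$ and the target is $|\nabla Rm|\leq C(n)$ on $B(x_0,1)$. In the maximum-principle computation below, $\lambda$ enters only via terms of favorable sign which may be dropped, so the resulting constant depends only on $n$.

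Fix a standard cutoff $\eta$ with $\eta\equiv 1$ on $B(x_0,1)$, $\operatorname{supp}\eta\subset B(x_0,2)$, $|\nabla\eta|\leq c(n)$, and $|\Delta\eta|\leq c(n)$; the Laplacian bound on $\eta$ follows from Laplacian comparison given $|Ric|\leq C$ on the ball. Combining (\ref{eq:Rm_evolve}), (\ref{eq:CoRm_evolve}) with $k=1$, and the Bochner-type identity $\Delta_f|T|^2=2\langle\Delta_f T,T\rangle+2|\nabla T|^2$, and discarding the positive $\lambda$-contributions, one gets
\begin{eqnarray*}
\Delta_f|Rm|^2 &\geq& 2|\nabla Rm|^2 - C|Rm|^3, \\
\Delta_f|\nabla Rm|^2 &\geq& 2|\nabla^2 Rm|^2 - C|Rm|\,|\nabla Rm|^2.
\end{eqnarray*}
Set $F:=\eta^2|\nabla Rm|^2+A|Rm|^2$ for $A=A(n)$ large. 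A routine computation, in which Cauchy-Schwarz absorbs the cross term $2\langle\nabla\eta^2,\nabla|\nabla Rm|^2\rangle$ into $\eta^2|\nabla^2 Rm|^2$ and $|\nabla f|\leq 1$ controls the drift part of $\Delta_f\eta^2=2\eta\Delta\eta+2|\nabla\eta|^2-2\eta\langle\nabla f,\nabla\eta\rangle$, yields $\Delta_f F\geq A|\nabla Rm|^2-C(n)$ on $B(x_0,2)$.

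Since the shrinker is complete, $\overline{B(x_0,2)}$ is compact and $F$ attains its maximum at some $x^*$. If $x^*\in\partial B(x_0,2)$, then $\eta(x^*)=0$ and $F(x^*)=A|Rm|^2(x^*)\leq A$; otherwise $\Delta_f F(x^*)\leq 0$ and the lower bound above forces $|\nabla Rm|^2(x^*)\leq C(n)/A$, hence $F(x^*)\leq C(n)$. Either way $\sup F\leq C(n)$, which on $B(x_0,1)$ (where $\eta=1$) gives $|\nabla Rm|\leq C(n)$, proving (\ref{eq:1st_deriv}) after undoing the scaling. The higher-order estimate (\ref{eq:2st_deriv}) follows by induction on $l$: assuming bounds on $\nabla^j Rm$ for $j<l$ on a slightly enlarged ball, the same argument applied to $F_l:=\eta^2|\nabla^l Rm|^2+A_l|\nabla^{l-1} Rm|^2$ using (\ref{eq:CoRm_evolve}), with a standard shrinking-ball scheme absorbing the successive domain losses, produces the bound for $\nabla^l Rm$.

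The main obstacle is the drift term $\langle\nabla f,\nabla\eta\rangle$ arising when $\Delta_f$ meets the cutoff: the hypothesis $|\nabla f|\leq r^{-1}$ is tailored precisely so that this drift is of the same scale as $|\Delta\eta|$ and can be absorbed. Without such a bound the computation would not close in a scale-invariant way.
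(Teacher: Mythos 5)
Your argument is correct and lands in the same family of localized maximum-principle estimates as the paper's, but it uses a different auxiliary function. The paper follows Shi's original \emph{multiplicative} form: it sets $u=(\beta r^{-4}+|Rm|^2)|\nabla Rm|^2$ (no cutoff), derives $\Delta_f u\geq c\,r^8u^2-Cr^{-12}$, and only then introduces the cutoff via $G=u\phi^2$, obtaining a quadratic inequality $c\,r^8G^2-Cr^{-2}G-Cr^{-12}\leq 0$ at the maximum, which is solved for $G$. You instead use the \emph{additive} Bernstein/Hamilton form $F=\eta^2|\nabla Rm|^2+A|Rm|^2$, in which the $A|Rm|^2$ term produces the good $2A|\nabla Rm|^2$ contribution that absorbs all the bad $|\nabla Rm|^2$ terms; the maximum-principle step then gives a linear inequality that is trivially solved. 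You also pre-rescale to $r=1$, whereas the paper carries $r$ explicitly; this is purely cosmetic. Both approaches are standard and close, with yours arguably simpler to close since no quadratic needs solving. One small precision worth fixing: you write $|\Delta\eta|\leq c(n)$, but the Laplacian comparison only gives a one-sided bound $\Delta d\leq$ (const) in the barrier sense on the annulus where $\nabla\eta\neq 0$ (the distance function is merely Lipschitz, so a lower bound on $\Delta d$ is not available pointwise). This is harmless because, in the term $|\nabla Rm|^2\,\Delta_f\eta^2$, the nonnegative factor $|\nabla Rm|^2$ means you only ever need $\Delta_f\eta^2\geq -c(n)$, which is exactly what comparison plus $\varphi'\leq 0$ and $|\nabla f|\leq 1$ provide (and is precisely how the paper phrases it). Your induction for the higher derivatives via $F_l=\eta^2|\nabla^l Rm|^2+A_l|\nabla^{l-1}Rm|^2$ with a shrinking-ball scheme matches the paper's ``standard induction'' remark.
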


\begin{proof}
The proof is a standard elliptic modification of Shi's estimates \cite{shi1989deforming}; we will only show (\ref{eq:1st_deriv}); the higher derivative estimates (\ref{eq:2st_deriv}) follow in a standard way by induction. We compute using (\ref{eq:Rm_evolve})
\begin{eqnarray*}
\Delta_f|Rm|^2&=&2\langle Rm, \Delta_f Rm\rangle+2|\nabla Rm|^2
\\
&=&2|\nabla Rm|^2+2\lambda |Rm|^2+Rm*Rm*Rm
\\
&\geq&2|\nabla Rm|^2-C_1 |Rm|^3,
\end{eqnarray*}
where $C_1<\infty$ depends only on the dimension $n$. By (\ref{eq:CoRm_evolve}), we have
\begin{eqnarray*}
\Delta_f|\nabla Rm|^2&=&2\langle\nabla Rm, \Delta_f \nabla Rm\rangle+2|\nabla^2 Rm|^2
\\
&=&2|\nabla^2Rm|^2+3\lambda |\nabla Rm|^2 +Rm*\nabla Rm*\nabla Rm
\\
&\geq&2|\nabla^2Rm|^2-C_2|Rm||\nabla Rm|^2,
\end{eqnarray*}
where $C_2<\infty$ depends only on the dimension $n$.
\\

Define $u=(\beta r^{-4}+|Rm|^2)|\nabla Rm|^2$, where $\beta>0$ is a constant that we will specify later, we have
\begin{eqnarray*}
\Delta_f u &=& |\nabla Rm|^2\Delta_f|Rm|^2+(\beta r^{-4}+|Rm|^2)\Delta_f|\nabla Rm|^2+2\langle \nabla |Rm|^2,\nabla|\nabla Rm|^2\rangle
\\
&\geq&2|\nabla Rm|^4-C_1|\nabla Rm|^2|Rm|^3+(\beta r^{-4}+|Rm|^2)(2|\nabla^2Rm|^2-C_2|Rm||\nabla Rm|^2)
\\
&&-8|\nabla Rm|\cdot|\nabla|Rm||\cdot|\nabla^2Rm|\cdot|Rm|
\\
&\geq&2|\nabla Rm|^4-C_1|\nabla Rm|^2|Rm|^3+(\beta r^{-4}+|Rm|^2)(2|\nabla^2Rm|^2-C_2|Rm||\nabla Rm|^2)
\\
&&-\frac{1}{2}|\nabla Rm|^4-32|\nabla^2Rm|^2|Rm|^2,
\end{eqnarray*}
where we have used Kato's inequality as well as the Cauchy-Schwarz inequality. Letting $\beta=16$ and taking into account that $|Rm|\leq r^{-2}$ in $B(x_0,2r)$, we have
\begin{eqnarray*}
\Delta_f u&\geq& \frac{3}{2}|\nabla Rm|^4-C_3r^{-6}|\nabla Rm|^2
\\
&\geq&|\nabla Rm|^4-C_4r^{-12},
\end{eqnarray*}
where we have used the Cauchy-Schwarz inequality, and $C_3$ and $C_4$ are constants depending only on $n$. By the definition of $u$ we have $\displaystyle|\nabla Rm|^4\geq\frac{r^8}{289}u^2$; hence we have
\begin{eqnarray}
\Delta_f u\geq c_5r^8u^2-C_5r^{-12},
\end{eqnarray}
where $c_5$ and $C_5$ are constants depending only on $n$.
\\

We let $\phi(x)=\varphi(d(x_0,x))$ be the cut-off function, where $\varphi(s)=0$ for $s\geq 2r$, $\varphi(s)=1$ for $s\in[0,r]$, and
\begin{eqnarray} \label{eq:cut_off}
0\leq\varphi&\leq&1,
\\\nonumber
-2r^{-1}\leq\varphi'(s)&\leq&0,
\\\nonumber
|\varphi''(s)|&\leq& 2 r^{-2},
\end{eqnarray}
for all $s\in[r,2r]$. We compute
\begin{eqnarray} \label{eq:middle_step}
\Delta_f(u\phi^2)&=&\phi^2\Delta_f u+u\Delta_f \phi^2+2\langle\nabla u, \nabla\phi^2\rangle
\\\nonumber
&\geq& c_5r^8u^2\phi^2-C_5r^{-12}\phi^2+2\langle\nabla(u\phi^2),\nabla\log{\phi^2}\rangle-8|\nabla\phi|^2u+u\Delta_f\phi^2.
\end{eqnarray}
The last two terms in (\ref{eq:middle_step}) need to be estimated. We have
\begin{eqnarray*}
|\nabla \phi|^2=\varphi'^2|\nabla d|^2\leq4r^{-2},
\end{eqnarray*}
and
\begin{eqnarray*}
\Delta_f\phi^2&=&2\phi(\varphi'\Delta_fd+\varphi''|\nabla d|^2)+2\varphi'^2|\nabla d|^2
\\
&=&2\phi(\varphi'\Delta d-\varphi'\langle\nabla f,\nabla d\rangle)+2\phi\varphi''|\nabla d|^2+2\varphi'^2|\nabla d|^2
\\
&\geq&2\left(-\frac{2(n-1)\coth{(1)}}{r^2}-\frac{2}{r^2}\right)-\frac{4}{r^2}\geq-C_6r^{-2},
\end{eqnarray*}
where $C_6$ is a positive constant depending only on the dimension $n$. In the above derivation we have used $|\nabla f|\leq r^{-1}$, the properties of $\varphi$ (\ref{eq:cut_off}), the Laplacian comparison theorem, and that $\varphi'(s)=0$ for all $s\in[0,r]$. Inserting the above inequalities into (\ref{eq:middle_step}), and defining $G=u\phi^2$, we have
\begin{eqnarray} \label{eq:middle_step_2}
\Delta_fG\geq c_5r^8\frac{G^2}{\phi^2}-C_5 r^{-12}\phi^2+2\langle\nabla G,\nabla\log{\phi^2}\rangle-C_7r^{-2}\frac{G}{\phi^2}.
\end{eqnarray}
Let $x_1\in B(x_0,2r)$ be a point where $G$ attains its maximum. Taking into account that $0\leq\phi\leq 1$, it follows from (\ref{eq:middle_step_2}) that
\begin{eqnarray*}
c_5r^8 G(x_1)^2-C_7 r^{-2}G(x_1)-C_5r^{-12}\leq 0,
\end{eqnarray*}
which solves $G(x_1)\leq C_8r^{-10}$, where $C_7$ and $C_8$ depend only on $n$. Therefore $u(x)\leq C_8r^{-10}$ on $B(x_0,r)$, where $\phi=1$ and $G=u$. It follows from the definition of the function $u$ that
\begin{eqnarray*}
|\nabla Rm|^2\leq C r^{-6}
\end{eqnarray*}
on $B(x_0,r)$.
\end{proof}

\bigskip

The following proposition says the smallness of the scalar curvature on a ball implies the smallness of the Ricci curvature on a smaller ball. Our argument is inspired by Theorem 3.2 in \cite{wang2012conditions}. Bamler and Zhang have implemented the same idea in \cite{bamler2015heat}.
\\

\begin{Ricci_Smallness} \label{Ricci_Smallness}
For any $\kappa>0$, there exists $\delta\in(0,2)$ and $C<\infty$, depending only on $\kappa$ and the dimension $n$, such that the following holds. Let $(M^n,g,f)$ be a shrinking gradient Ricci soliton such that
\begin{eqnarray*}
Ric+\nabla^2 f=\frac{\lambda}{2}g,
\end{eqnarray*}
where $\lambda>0$. Let $x_0\in M$ and $r\in(0,1]$. If
\begin{eqnarray*}
|Rm|\leq 2,\ R\leq r^2,\ \text{and}\ |\nabla f|\leq r
\end{eqnarray*}
on $B(x_0,2)$, and
\begin{eqnarray*}
\operatorname{Vol}(B(x_0,2))\geq\kappa.
\end{eqnarray*}
Then
\begin{eqnarray} \label{eq:Ricci_small}
|Ric|\leq Cr
\end{eqnarray}
on $\displaystyle B(x_0,\frac{\delta}{2})$.
\end{Ricci_Smallness}

\begin{proof}
We define a cut-off function that is similar to the one that we have used in the proof of the last proposition. Let $\phi(x)=\varphi(d(x_0,x))$, where $\varphi(s)=0$ for $s\geq 2$, $\varphi(s)=1$ for $s\in[0,1]$, and
\begin{eqnarray} \label{eq:cut_off_2}
0\leq\varphi&\leq&1,
\\\nonumber
-2\leq\varphi'(s)&\leq&0,
\\\nonumber
|\varphi''(s)|&\leq& 2,
\end{eqnarray}
for $s\in[1,2]$. Integrating the equation (\ref{eq:R_evolve}) against $\phi$, we have
\begin{eqnarray*}
2\int|Ric|^2\phi&=&\lambda\int R\phi - \int\phi\Delta R +\int\langle\nabla f,\nabla R\rangle\phi
\\
&=&\lambda\int R\phi-\int R\Delta\phi-\int\phi R\Delta f-\int R\langle\nabla\phi,\nabla f\rangle
\\
&=&(1-\frac{n}{2})\lambda\int R\phi-\int R\Delta\phi+\int \phi R^2-\int R\langle\nabla\phi,\nabla f\rangle
\\
&\leq&-\int R\Delta\phi+\int \phi R^2-\int R\langle\nabla\phi,\nabla f\rangle
\end{eqnarray*}
where we have used $\displaystyle\Delta f=\frac{n}{2}\lambda-R$ and Chen's result \cite{chen2009strong} that $R\geq 0$ on a Ricci shrinker. By the Laplacian comparison theorem, the Bishop-Gromov volume comparison theorem, and the property of $\phi$ (\ref{eq:cut_off_2}), we have
\begin{eqnarray*}
-\Delta\phi&\leq& C_1,
\\
\langle\nabla\phi,\nabla f\rangle&\leq& C_2 r,
\\
\operatorname{Vol}(B(x_0,2))&\leq&C_3,
\end{eqnarray*}
where $C_1$, $C_2$, and $C_3$ are positive constants depending only on the dimension $n$. It then follows that
\begin{eqnarray*}
\int|Ric|^2\phi\leq C_1C_3r^2+C_3 r^4+C_2C_3r^3,
\end{eqnarray*}
and that
\begin{eqnarray*}
\|Ric\|_{L^2(B(x_0,\delta))}\leq C_4 r,
\end{eqnarray*}
where $C_4$ depends only on the dimension $n$, and $\delta\in(0,2)$ is the positive number given by Proposition \ref{prop_Sobolev_Inequality} that depends only on $\kappa$ and the dimension $n$.
\\

We have the following inequality satisfied by $|Ric|$:
\begin{eqnarray*}
2|Ric|\Delta_f|Ric|+2|\nabla|Ric||^2&=&\Delta_f|Ric|^2=2\langle Ric,\Delta_f Ric\rangle+2|\nabla Ric|^2
\\
&=&2\lambda|Ric|^2+2|\nabla Ric|^2-Rm*Ric*Ric
\\
&\geq&2|\nabla Ric|^2-Rm*Ric*Ric.
\end{eqnarray*}
Taking into account that $|Rm|\leq 2$ on $B(x_0,2)$ and Kato's inequality that $|\nabla|Ric||^2\leq|\nabla Ric|^2$, we have
\begin{eqnarray} \label{eq:ineq:Ricci}
\Delta_f|Ric|\geq -C_5 |Ric|,
\end{eqnarray}
where $C_5$ depends on the dimension $n$. We use the local Sobolev inequality (\ref{eq:Sobolev}) to apply the standard Moser iteration to the inequality (\ref{eq:ineq:Ricci}). Notice that we need to use $|\nabla f|\leq r\leq1$ when performing the iteration. Indeed, this is the only reason why we have to put a restriction on the scale $r$. It follows that
\begin{eqnarray*}
\sup_{B(x_0,\frac{\delta}{2})}|Ric|\leq C_6\|Ric\|_{L^2(B(x,\delta))}\leq Cr,
\end{eqnarray*}
where $C$ depends only on $\kappa$ and the dimension $n$.

\end{proof}
\bigskip

\section{Bounded curvature at bounded distance}

\newtheorem{BCBD}{Theorem}[section]

In this section we prove a bounded curvature at bounded distance theorem for Ricci shrinkers in the space $\mathcal{M}^4(\kappa)$. This result is an analogue to Perelman's bounded curvature at bounded distance result (see section 11 of \cite{perelman2002entropy}). The fact that the Ricci-flat limit does not appear in our argument plays an equally important role as the fact that the asymptotic volume ratio equals zero in Perelman's argument. However, our result is somewhat weaker than Perelman's. We are only able to fix the base point where the potential function attains its minimum (or wherever is at a bounded distance to it). The reason is because when a point is far off from the minimum point of the potential function, the largeness of the curvature scale is no longer characterized by $1$, but by $|\nabla f|^2$. Suppose around a point the curvature is large compared to $1$ but small compared to $|\nabla f|^2$, then the a priori estimates we have established in the previous section do not hold any more, since the assumption $|\nabla f|\leq r\leq 1$ made in Proposition \ref{Ricci_Smallness} is no longer valid after scaling, and Moser iteration does not yield a nice bound for the Ricci curvature as (\ref{eq:Ricci_small}).
\\

\begin{BCBD} \label{BCBD}
There exists $C<\infty$ and $D<\infty$ depending only on $\kappa$, such that the following holds. Let $(M^4,g,f,p)\in\mathcal{M}^4(\kappa)$ be normalized as in (\ref{eq:normalization}). Then it holds that
\begin{eqnarray*}
|Rm|(x)&\leq&C,\ \text{if}\ x\in B(p,200),
\\
\frac{|Rm|}{f}(x)&\leq& D,\ \text{if}\ x\notin B(p,200).
\end{eqnarray*}
\end{BCBD}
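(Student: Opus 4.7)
The plan is to argue by contradiction. Suppose no such $C,D$ exist. Then there is a sequence $(M_k^4,g_k,f_k,p_k)\in\mathcal{M}^4(\kappa)$ together with points $x_k\in M_k$ falling into one of two regimes: either $x_k\in B(p_k,200)$ with $Q_k:=|Rm|(x_k)\to\infty$, or $x_k\notin B(p_k,200)$ with $Q_k:=|Rm|(x_k)$ satisfying $Q_k/f_k(x_k)\to\infty$. My goal is to extract a smooth pointed Cheeger-Gromov limit of the rescaled pointed manifolds $(M_k,Q_k g_k,x_k)$ that is a complete, nonflat, Ricci-flat, strongly $\kappa$-noncollapsed four-manifold with Euclidean volume growth, then derive a contradiction from the topological hypothesis $H_2(N_k)=0$.

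A Perelman-style point-picking first moves $x_k$ to a nearby point (denoted again $x_k$) with $|Rm|(x_k)\geq Q_k$ and $|Rm|\leq 4|Rm|(x_k)$ on the $g_k$-ball $B(x_k,A_k/\sqrt{|Rm|(x_k)})$, with $A_k\to\infty$ slowly enough that the relevant regime condition is preserved and $x_k$ stays outside $B(p_k,199)$ in the second regime. Setting $\tilde g_k:=|Rm|(x_k)\,g_k$, the soliton equation becomes
\[
\widetilde{Ric}+\tilde\nabla^2 f_k=\bigl(2|Rm|(x_k)\bigr)^{-1}\tilde g_k,
\]
with $|\widetilde{Rm}|\leq 4$ on the $\tilde g_k$-ball of radius $A_k$ about $x_k$ and $|\widetilde{Rm}|(x_k)=1$. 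The key observation is that $|\tilde\nabla f_k|\to 0$ uniformly on every fixed-radius $\tilde g_k$-ball about $x_k$: in the first regime, Proposition \ref{Potential_Growth} bounds $f_k\leq C$ on $B(p_k,200)$, so $|\nabla f_k|^2\leq f_k\leq C$ and $|\tilde\nabla f_k|^2\leq C/|Rm|(x_k)\to 0$; in the second regime, $f_k$ varies negligibly across such a small $g_k$-ball, so $|\tilde\nabla f_k|^2\leq 2f_k(x_k)/|Rm|(x_k)\to 0$ by assumption. Propositions \ref{Shi} and \ref{Ricci_Smallness} then furnish uniform $C^\infty$ curvature bounds, scale-invariant strong $\kappa$-noncollapsing provides uniform lower volume bounds, and Hamilton's Cheeger-Gromov compactness produces a smooth pointed limit $(M_\infty^4,g_\infty,x_\infty)$. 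Passing the soliton equation to the limit with $|\tilde\nabla f_k|\to 0$ yields $Ric_\infty\equiv 0$ with $|Rm_\infty|(x_\infty)=1$, so $M_\infty$ is a nonflat Ricci-flat four-manifold.

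Because $M_\infty$ is Ricci-flat, $R\equiv 0$ and the strong $\kappa$-noncollapsing hypothesis applies at every scale, giving $\operatorname{Vol}(B(y,r))\geq\kappa r^4$ for every $y\in M_\infty$ and $r>0$; together with Bishop-Gromov this forces $M_\infty$ to have Euclidean volume growth. Cheeger-Naber \cite{cheeger2015regularity} then implies $M_\infty$ is ALE. Finally, smooth Cheeger-Gromov convergence embeds any sufficiently large compact $K\subset M_\infty$ smoothly into $M_k\subset N_k$ for large $k$; choosing $K$ large enough to capture the topology of the ALE end of $M_\infty$, this embedding contradicts Anderson's Corollary 5.8 \cite{anderson2010survey} (reproved in section 6) since $H_2(N_k)=0$. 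The main technical obstacle is the point-picking and $|\tilde\nabla f_k|\to 0$ verification in the second regime, where one must rescale by $|Rm|(x_k)$ rather than $f_k(x_k)$ and still arrange that the hypotheses of Propositions \ref{Shi} and \ref{Ricci_Smallness} hold after rescaling; once the blow-up limit has been produced, the topological contradiction via Anderson's theorem is essentially automatic.
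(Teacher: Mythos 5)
Your proposal follows the paper's proof essentially step for step: contradiction, a Perelman-style point picking, rescaling by $|Rm|(x_k)$, using the normalization $|\nabla f|^2 + R = f$ together with Proposition \ref{Potential_Growth} to show the rescaled $R$ and $|\nabla f|$ tend to zero (with the $|\nabla\sqrt{f_k}|\leq\tfrac12$ estimate handling regime two), invoking Propositions \ref{Shi} and \ref{Ricci_Smallness} for derivative and Ricci-curvature smallness, extracting a nonflat Ricci-flat limit, using strong $\kappa$-noncollapsing to get Euclidean volume growth, Cheeger--Naber to get ALE, and finally Anderson's theorem to contradict $H_2(N_k)=0$. The one place you should tighten the phrasing is the step "passing the soliton equation to the limit with $|\tilde\nabla f_k|\to 0$ yields $Ric_\infty\equiv 0$": a pointwise bound $|\tilde\nabla f_k|\to 0$ does not by itself control $\tilde\nabla^2 f_k$, so Ricci-flatness of the limit should be deduced directly from the quantitative estimate $|\widetilde{Ric}_k|\leq Cr_k\to 0$ provided by Proposition \ref{Ricci_Smallness}, exactly as the paper does; your parenthetical citation of that proposition shows you already have this in hand, but the conclusion should be attributed to it rather than to the decay of the gradient alone.
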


\begin{proof}
\newtheorem{Point_Picking}{Claim}

We argue by contradiction. Suppose the statement is not true, then there exist a sequence of counterexamples $\{(M^4_k,f_k,g_k,p_k)\}_{k=1}^\infty$ normalized as in (\ref{eq:normalization}), and $x_k\in M_k$, such that for all $k\geq 1$, either
\\

\noindent(a) $x_k\in B_{g_k}(p_k,200)$ and $|Rm_k|(x_k)\geq k$,
\\

\noindent or,
\\

\noindent(b) $x_k\notin B_{g_k}(p_k,200)$ and $\displaystyle\frac{|Rm_k|}{f_k}(x_k)\geq k$.
\\

Notice that by (\ref{eq:potential}), we have $f_k(x)\geq 1000$ whenever $x\notin B(p_k,200)$; hence $|Rm_k|(x_k)\rightarrow\infty$.
\\

The following standard point picking technique is due to Perelman \cite{perelman2002entropy}.
\\

\begin{Point_Picking} \label{Point_Picking}
There exists $A_k\rightarrow\infty$ and $y_k\in B_{g_k}(x_k,1)$, such that
\begin{eqnarray} \label{eq:Point_Picking}
|Rm_k|(x)\leq 2Q_k,\ \text{for all}\ x\in B_{g_k}(y_k,A_kQ_k^{-\frac{1}{2}})\subset B_{g_k}(x_k,2),
\end{eqnarray}
where $Q_k=|Rm_k|(y_k)\geq|Rm_k|(x_k)$.
\end{Point_Picking}

\begin{proof}[Proof of the Claim]
Denote $Q_k^{(0)}=|Rm_k|(x_k)$ and $y_k^{(0)}=x_k$, let $\displaystyle A_k=\frac{1}{100}\left(Q_k^{(0)}\right)^{\frac{1}{2}}\rightarrow\infty$. We start from $y_k^{(0)}$. Suppose $y_k^{(j)}$ is chosen, and cannot be taken as $y_k$. Let $|Rm_k|\left(y_k^{(j)}\right)=Q_k^{(j)}$, then there exists $\displaystyle y_k^{(j+1)}\in B_{g_k}\left(y_k^{(j)},A_k\left(Q_k^{(j)}\right)^{-\frac{1}{2}}\right)$, such that $Q_k^{(j+1)}=|Rm_k|\left(y_k^{(j+1)}\right)\geq 2Q_k^{(j)}$. Hence we have
\begin{eqnarray*}
dist_{g_k}\left(y_k^{(0)},y_k^{(j+1)}\right)&\leq& A_k\left(Q_k^{(0)}\right)^{-\frac{1}{2}}+ A_k\left(Q_k^{(1)}\right)^{-\frac{1}{2}}+...+ A_k\left(Q_k^{(j)}\right)^{-\frac{1}{2}}
\\
&\leq& A_k\left(Q_k^{(0)}\right)^{-\frac{1}{2}}\left(1+\frac{1}{\sqrt{2}}+...+\left(\frac{1}{\sqrt{2}}\right)^j+...\right)
\\
&\leq&\frac{1}{100}\times 4,
\end{eqnarray*}
and it follows that $y_k^{(j)}\in B_{g_k}(x_k,1)$ for all $j\geq 0$. This procedure must terminate in a finite steps since the manifold $M_k$ is smooth; then the last element chosen by this procedure can be taken as $y_k$.
\end{proof}

\bigskip

Since for any $k\geq 1$ there can be only two cases (a) or (b), then either for infinitely many $k$, (a) holds, or, for infinitely many $k$, (b) holds. By passing to a subsequence, we need only to deal with the following two cases.
\\

\noindent Case \textbf{I.} $x_k\in B_{g_k}(p_k,200)$ and $|Rm_k|(x_k)\geq k$, for all $k\geq 1$.
\\

\noindent Case \textbf{II.} $x_k\notin B_{g_k}(p_k,200)$ and $\displaystyle\frac{|Rm_k|}{f_k}(x_k)\geq k$, for all $k\geq 1$.
\\

We first consider Case \textbf{I}. We use Claim \ref{Point_Picking} to find $y_k\in B_{g_k}(x_k,1)$, $Q_k=|Rm_k|(y_k)\geq|Rm_k|(x_k)\rightarrow\infty$, and $A_k\rightarrow\infty$ such that (\ref{eq:Point_Picking}) holds. By (\ref{eq:potential}) we have
\begin{eqnarray*}
R_k+|\nabla f_k|^2=f_k\leq 10^5
\end{eqnarray*}
on $B_{g_k}(y_k,A_k Q_k^{-\frac{1}{2}})\subset B_{g_k}(p_k,202)$. We scale $g_k$ with the factor $Q_k$ and use the notations with overlines to denote the scaled geometric quantities, that is, $\overline{g}_k=Q_kg_k$, $\overline{Rm}_k=Rm(\overline{g}_k)$, etc. Then we have that
\begin{eqnarray} \label{eq:1}
\overline{Ric}_k+\overline{\nabla}^2f_k=\frac{Q_k^{-1}}{2}\overline{g}_k,
\end{eqnarray}
and that
\begin{eqnarray}
\left|\overline{Rm}_k\right|&\leq& 2, \label{eq:2}
\\
\overline{R}_k+\left|\overline{\nabla}f_k\right|^2&\leq&\frac{10^5}{Q_k}:=r_k^2\rightarrow 0, \label{eq:special}
\end{eqnarray}
on $B_{\overline{g}_k}(y_k,A_k)$, and by Proposition \ref{Shi} and Proposition \ref{Ricci_Smallness} that
\begin{eqnarray}
\left|\overline{\nabla}\,\overline{Rm}_k\right|&\leq& C_1, \label{eq:Rm_grad}
\\
\left|\overline{Ric}_k\right|&\leq& C_2 r_k, \label{eq:Ric_small}
\end{eqnarray}
on $B_{\overline{g}_k}(y_k,A_k-2)$, where $C_1$ is a constant depending only on the dimension $n=4$, and $C_2$ is a constant depending only on the dimension $n=4$ and $\kappa>0$. We can apply (\ref{eq:2}), (\ref{eq:Rm_grad}), and the strong $\kappa$-noncollapsing assumption to extract from $\{(B_{\overline{g}_k}(y_k,A_k-2),\overline{g}_k,y_k)\}_{k=1}^\infty$ a subsequence that converges in the pointed $C^{2,\alpha}$ Cheeger-Gromov sense to a complete nonflat Riemannian manifold $(M_\infty,g_\infty,y_\infty)$ with $|Rm_\infty|(x_\infty)=1$. By (\ref{eq:Ric_small}), $(M_\infty,g_\infty)$ must be Ricci-flat and therefore has Euclidean volume growth, since it is also strongly $\kappa$-noncollapsed. By Cheeger and Naber (Corollary 8.86 of \cite{cheeger2015regularity}), $(M_\infty,g_\infty)$ is asymptotically locally Euclidean (ALE). By the definition of ALE, we have that outside a compact set $M_\infty$ is diffeomorphic to a finite quotient of $\mathbb{R}^4\setminus B(O,1)$, it follows that there exists an open set $U_\infty\subset M_\infty$ containing the point $y_\infty$, such that $\overline{U}_\infty$ is compact and that $M_\infty$ is diffeomorphic to $U_\infty$. By the definition of the pointed Cheeger-Gromov convergence, $U_\infty$ can be embedded in infinitely many elements of the sequence $\{(M^4_k,f_k,g_k,p_k)\}_{k=1}^\infty$, every one of which can be embedded in a closed four-manifold with vanishing second homology group; it follows that $U_\infty$ can also be imbedded in such a closed four-manifold with vanishing second homology group, which is a contradiction against Theorem \ref{Topology}.
\\

Case \textbf{II} is almost the same as Case \textbf{I}. By the same point picking and scaling method we also get (\ref{eq:1}), (\ref{eq:2}), (\ref{eq:Rm_grad}), and (\ref{eq:Ric_small}). The only place where special care should be taken is (\ref{eq:special}). Notice that by (\ref{eq:potential}), we have that $f_k(x)\geq 1000$ whenever $dist_{g_k}(x,p_k)\geq 198$. Moreover, since $\displaystyle\left|\nabla \sqrt{f_k}\right|\leq\frac{1}{2}$, we have
\begin{eqnarray*}
\sqrt{f_k(x)}\leq\sqrt{f_k(x_k)}+1\leq\sqrt{\frac{10}{9}f_k(x_k)},
\end{eqnarray*}
for all $x\in B_{g_k}\left(y_k,A_kQ_k^{-\frac{1}{2}}\right)\subset B_{g_k}(x_k,2)$. It follows that
\begin{eqnarray*}
\overline{R}_k+\left|\overline{\nabla}f_k\right|^2=\frac{f_k}{Q_k}\leq\frac{10}{9}\frac{f_k(x_k)}{|Rm_k|(x_k)}:=r_k^2\rightarrow 0,
\end{eqnarray*}
on $B_{\overline{g}_k}(y_k,A_k)$. Therefore (\ref{eq:special}) also holds in Case \textbf{II} and we obtain the same contradiction as in Case \textbf{I}.

\end{proof}
\bigskip

\section{Proof of the main theorem}

\begin{proof} [Proof of Theorem \ref{Main_Theorem}]

\newtheorem{Claim}{Claim}

By Theorem \ref{BCBD}, Proposition \ref{Shi}, and (\ref{eq:potential}), we obtain locally uniform bounds for the curvatures, the derivatives of the curvatures, and the potential functions for any sequence in the space $\mathcal{M}^4(\kappa)$. Applying the standard regularity theorem to the elliptic equation
\begin{eqnarray*}
\Delta f =\frac{n}{2}-R,
\end{eqnarray*}
we also obtain locally uniform bounds for the derivatives of the potential functions. Hence we can extract from any sequence contained in $\mathcal{M}^4(\kappa)$ a subsequence that converges in the smooth pointed Cheeger-Gromov sense to a shrinking gradient Ricci soliton, also normalized as in (\ref{eq:normalization}). It remains to show that the limit Ricci shrinker is nonflat.
\\

Let $\{(M_k,g_k,f_k,p_k)\}_{k=1}^\infty\subset\mathcal{M}^4(\kappa)$, all normalized as in (\ref{eq:normalization}), and let $(M_\infty,g_\infty,f_\infty,p_\infty)$ be their limit Ricci shrinker in the smooth pointed Cheeger-Gromov sense, also normalized as in (\ref{eq:normalization}). Then we have that the following holds.
\\

\begin{Claim}
\begin{eqnarray}\label{eq:f_volume}
\operatorname{Vol}_f(g_\infty)=\lim_{k\rightarrow\infty}\operatorname{Vol}_f(g_k),
\end{eqnarray}
where $\operatorname{Vol}_f$ is the $f$-volume defined by $\displaystyle\operatorname{Vol}_f(g)=\int_M e^{-f}dg$.
\end{Claim}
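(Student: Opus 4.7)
The plan is to prove (\ref{eq:f_volume}) by a standard tightness argument: split each $f$-volume into a compact piece and a tail, use the smooth pointed Cheeger-Gromov convergence on the compact piece, and bound the tails uniformly via the Gaussian-type decay of $e^{-f}$ coming from Proposition \ref{Potential_Growth} together with the polynomial volume bound from Proposition \ref{Volume_Growth}.

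Concretely, for any $R > 20$ I would write
$$\operatorname{Vol}_f(g_k) = \int_{B_{g_k}(p_k,R)} e^{-f_k}\, dg_k + \int_{M_k\setminus B_{g_k}(p_k,R)} e^{-f_k}\, dg_k,$$
and similarly for $g_\infty$. The first part of the proof of Theorem \ref{Main_Theorem} already supplies smooth convergence of $(M_k,g_k,f_k,p_k)$ to $(M_\infty,g_\infty,f_\infty,p_\infty)$, and hence smooth convergence of $e^{-f_k}\,dg_k$ to $e^{-f_\infty}\,dg_\infty$ on any fixed compact set containing $\overline{B_{g_\infty}(p_\infty,R)}$. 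This immediately gives, for each fixed $R$,
$$\int_{B_{g_k}(p_k,R)} e^{-f_k}\, dg_k \longrightarrow \int_{B_{g_\infty}(p_\infty,R)} e^{-f_\infty}\, dg_\infty.$$

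For the tails, Proposition \ref{Potential_Growth} yields $f(x)\geq\tfrac{1}{4}(d(x,p)-20)^2$ on $M\setminus B(p,20)$ and Proposition \ref{Volume_Growth} yields $\operatorname{Vol}(B(p,r))\leq Cr^4$, both with constants independent of the particular element of $\mathcal{M}^4(\kappa)$; since $(M_\infty,g_\infty,f_\infty,p_\infty)$ is normalized the same way, the same two estimates also apply to the limit. A dyadic decomposition into the annular shells $B(p,R+j+1)\setminus B(p,R+j)$ then produces
$$\int_{M\setminus B(p,R)} e^{-f}\, dg \;\leq\; C\sum_{j=0}^{\infty}(R+j+1)^4\, e^{-\frac{1}{4}(R+j-20)^2},$$
which tends to $0$ as $R\to\infty$, uniformly in the choice of $(M,g,f,p)$.

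Combining these two ingredients gives (\ref{eq:f_volume}) in the standard $\varepsilon/3$ way: given $\varepsilon>0$, first pick $R$ large enough that both the $g_k$-tails (for every $k$) and the $g_\infty$-tail are less than $\varepsilon/3$, then choose $k$ large enough that the compact pieces differ by less than $\varepsilon/3$. I do not anticipate a serious obstacle; the only point requiring a line of comment is that the Gaussian decay of $e^{-f}$ uniformly dominates the polynomial volume growth, which is already built into the cited estimates, and that the base points $p_k$ converge to $p_\infty$ compatibly with the Cheeger-Gromov convergence, which follows from the smooth convergence of the potentials $f_k\to f_\infty$ together with the uniform quadratic growth lower bound on each $f_k$.
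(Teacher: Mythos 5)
Your proposal is correct and follows essentially the same route as the paper: the paper also controls the tails uniformly via the quadratic lower bound on $f$ from Proposition \ref{Potential_Growth} together with the polynomial volume bound from Proposition \ref{Volume_Growth}, and then passes to the limit on large balls using the smooth Cheeger--Gromov convergence. Your write-up merely makes the dyadic annulus estimate explicit, which the paper leaves implicit.
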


\begin{proof}[Proof of the Claim]
By the uniform rapid decay of $e^{-f}$ (\ref{eq:potential}) and the uniform volume growth bound (\ref{eq:volume}) we have that for any $\eta>0$, there exists $A_0<\infty$ such that for all $A>A_0$ it holds that
\begin{eqnarray*}
\operatorname{Vol}_f(g_k)-\eta<\int_{B_{g_k}(p_k,A)}e^{-f_k}dg_k\leq \operatorname{Vol}_f(g_k),
\end{eqnarray*}
for every $k\geq 1$. The conclusion follows from first taking $k\rightarrow\infty$, and then $A\rightarrow\infty$, and finally $\eta\rightarrow 0$.
\end{proof}

\bigskip

By Proposition \ref{Gap} and (\ref{eq:f_volume}) we have
\begin{eqnarray*}
\operatorname{Vol}_f(g_\infty)=\lim_{k\rightarrow\infty}\operatorname{Vol}_f(g_k)\leq (4\pi)^{\frac{n}{2}}(1-\varepsilon),
\end{eqnarray*}
where $\varepsilon>0$ is given by Proposition \ref{Gap}. Hence $(M_\infty,g_\infty,f_\infty,p_\infty)$ is not flat, because the $f$-volume of the Gaussian shrinker is $(4\pi)^{\frac{n}{2}}$.

\end{proof}

\bigskip

\begin{proof}[Proof of Corollary \ref{Main_Corollary}]
To prove (a) we argue by contradiction. Suppose there exists $\{(M_k,g_k,f_k,p_k)\}_{k=1}^\infty\subset\mathcal{M}^4(\kappa)$ such that $R_k(p_k)\rightarrow 0$. By Theorem \ref{Main_Theorem} we can extract a subsequence that converges to a shrinking gradient Ricci soliton $(M_\infty,g_\infty,f_\infty,p_\infty)$ with $R_\infty(p_\infty)=0$, which by Chen \cite{chen2009strong} is flat, a contradiction.
\\

To prove (b), we recall that by the proof of Chow, Lu, and Yang \cite{chow2011lower}, we only need a uniform upper bound for $f$ and a uniform lower bound for $R$ on a sufficiently large ball, say $B(p,1000)$, where the former is given by (\ref{eq:potential}) and the latter is proved in the same way as for (a).
\\

To prove (c), we claim that there exist $c>0$, depending only on $\kappa$, such that $\displaystyle\operatorname{Vol}_f(g)>c$ for all $(M,g,f,p)\in\mathcal{M}^4(\kappa)$. Suppose this is not true. As in the proof of (a), we can find a sequence of counterexamples converging to a Ricci shrinker $(M_\infty,g_\infty,f_\infty,p_\infty)$ with $\displaystyle\operatorname{Vol}_f(g_\infty)=0$, which is a contradiction. Hence we have $\displaystyle\operatorname{Vol}_f(g)\in[c,(4\pi)^{\frac{n}{2}}(1-\varepsilon)]$ for all $(M,g,f,p)\in\mathcal{M}^4(\kappa)$. The conclusion follows from Wylie \cite{wylie2008complete} and Chow and Lu \cite{chow2016bound}.

\end{proof}

\bigskip

\section{Excluding instantons by a topological condition}

In this section we provide an alternative proof for Corollary 5.8 of \cite{anderson2010survey}. This proof is based in essence altogether on the personal notes of Richard Bamler, to whom we are indebted for graciously providing them to the author. However, any mistakes in transcription is solely due to the author. Forasmuch as Anderson's result is of fundamental importance to our main theorem, we include this section for the sake of completeness to help the readers to follow some details.
\\

\newtheorem{Topology}{Theorem}[section]
\newtheorem{Lemma_6_1}[Topology]{Lemma}
\newtheorem{Lemma_6_2}[Topology]{Lemma}
\newtheorem{Lemma_6_3}[Topology]{Lemma}
\newtheorem{Lemma_6_4}[Topology]{Lemma}

\begin{Topology} \label{Topology}
Let $N$ be a smooth closed $4$-dimensional manifold such that
\begin{eqnarray} \label{Homology_0}
H_2(N)=0,
\end{eqnarray}
where $H_2$ is the second homology group with coefficients in $\mathbb{Z}$. Then there is no open subset $U\subset N$ with the property that $U$ admits an Einstein ALE metric.
\end{Topology}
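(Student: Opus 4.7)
My plan is to combine the topology of the ALE end with Mayer--Vietoris to reduce the theorem to showing that a nonflat Ricci-flat ALE 4-manifold has nontrivial second homology, and then to establish the latter via Chern--Gauss--Bonnet with the Atiyah--Patodi--Singer signature theorem.

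First I would reduce to the nonflat Ricci-flat case. In dimension four an Einstein ALE metric is forced to be Ricci-flat, since a nonzero Einstein constant is incompatible with the asymptotic flatness at infinity; and by the rigidity theorem of Bando--Kasue--Nakajima a Ricci-flat ALE 4-manifold asymptotic to $\mathbb{R}^{4}$ is isometric to flat $\mathbb{R}^{4}$ (this case does not contradict the statement, since $\mathbb{R}^{4}\subset S^{4}$ and $H_{2}(S^{4})=0$, so ``Einstein ALE'' must be read as nonflat). Hence I may assume $U$ is nonflat with asymptotic cone $\mathbb{R}^{4}/\Gamma$ for some nontrivial finite $\Gamma\subset SO(4)$ acting freely on $S^{3}$. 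I would then choose a compact codimension-zero submanifold-with-boundary $K\subset U$ large enough that $U\setminus\operatorname{int}(K)\cong (\mathbb{R}^{4}\setminus\bar B)/\Gamma$ and $\partial K\cong S^{3}/\Gamma$; note $K\simeq U$. Since $K$ is compact and $U$ is open in $N$, the complement $W=N\setminus\operatorname{int}(K)$ is a compact 4-manifold with $\partial W=S^{3}/\Gamma$, and $N=K\cup W$. The quotient $S^{3}/\Gamma$ is a rational homology sphere with $H_{2}(S^{3}/\Gamma;\mathbb{Z})=0$ (by Poincar\'e duality applied to the finite group $H_{1}(S^{3}/\Gamma;\mathbb{Z})=\Gamma^{\mathrm{ab}}$), so Mayer--Vietoris gives an injection
\begin{equation*}
H_{2}(K;\mathbb{Z})\oplus H_{2}(W;\mathbb{Z})\hookrightarrow H_{2}(N;\mathbb{Z}).
\end{equation*}
Thus $H_{2}(N;\mathbb{Z})=0$ forces $H_{2}(U;\mathbb{Z})=H_{2}(K;\mathbb{Z})=0$.

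The crux is then the geometric step: showing every nonflat Ricci-flat ALE 4-manifold has $H_{2}(U;\mathbb{Z})\neq 0$. Since $\pi_{1}(U)=\Gamma$ is finite, $b_{1}(U)=0$ (Bochner on harmonic 1-forms for Ricci-flat manifolds, together with ALE decay), and Lefschetz duality on the compact core gives $b_{3}(U)=0$; so $\chi(U)=1+b_{2}(U)$. The Chern--Gauss--Bonnet formula for Ricci-flat ALE 4-manifolds reads
\begin{equation*}
\chi(U)=\frac{1}{8\pi^{2}}\int_{U}|W|^{2}\,dV+\frac{1}{|\Gamma|},
\end{equation*}
with $|W|^{2}\not\equiv 0$ by nonflatness and $|\Gamma|\geq 2$, giving $\chi(U)>|\Gamma|^{-1}$ and hence $\chi(U)\geq 1$. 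To upgrade this to $\chi(U)\geq 2$, I would combine it with the Atiyah--Patodi--Singer signature formula (with the eta-invariant correction over $S^{3}/\Gamma$) and the pointwise positivity of $|W^{\pm}|^{2}$; these constraints are incompatible with the borderline configuration $\chi=1$, $b_{2}=0$, yielding $b_{2}(U)\geq 1$ and the desired contradiction.

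The principal obstacle is precisely this last sharpening---ruling out the hypothetical nonflat Ricci-flat ALE ``rational homology ball'' ($\chi(U)=1$, $b_{2}(U)=0$). The Chern--Gauss--Bonnet identity alone is insufficient, and one must feed in the APS signature formula together with positivity arguments on the Weyl curvature integrals, or else appeal to an $L^{2}$-cohomology or positive-mass-type argument at infinity. This is the technical heart of Anderson's original approach, and is presumably where Bamler's notes provide an alternative and more streamlined topological obstruction.
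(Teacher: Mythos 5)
Your framing is right up to the point where you derive $H_{2}(U;\mathbb{Z})=0$ from $H_{2}(N;\mathbb{Z})=0$ via Mayer--Vietoris --- that part coincides with the paper. But you then try to close the argument by showing that \emph{every} nonflat Ricci-flat ALE $4$-manifold has $b_{2}\geq 1$, and you candidly acknowledge that Chern--Gauss--Bonnet plus ``APS and positivity of $|W^{\pm}|^{2}$'' is not enough to kill the borderline case $\chi(U)=1$, $b_{2}(U)=0$. That is exactly where the gap is, and it is not a small one: the signature formula $\tau(U)=\tfrac{1}{12\pi^{2}}\int_{U}(|W^{+}|^{2}-|W^{-}|^{2})-\eta_{S}(S^{3}/\Gamma)$ only forces a contradiction if one already knows $\eta_{S}(S^{3}/\Gamma)>\tfrac{2}{3}$, and for general $\Gamma<SO(4)$ (e.g.\ lens spaces $L(p;q)$) the $\eta$-invariant can be small or even negative, so no purely analytic estimate can rule out $b_{2}=0$ across the board. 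The approach as you've stated it therefore cannot be completed without further input.

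The extra input the paper extracts --- and which your outline misses --- is that $H_{2}(N)=0$ constrains much more than just $H_{2}(U)$. First one shows $H_{1}(U,\partial U)=0$ (this uses the geometry: otherwise the universal cover of $U$ would have several ends, and the Cheeger--Gromoll splitting theorem would force the lifted Ricci-flat metric to be flat). Then a chase through Poincar\'e--Lefschetz duality and Mayer--Vietoris shows the long exact sequence $0\to H_{1}(U)\to H_{1}(\partial U)\to H_{1}(U)\to 0$ is split, i.e.\ $H_{1}(S^{3}/\Gamma)\cong H_{1}(U)\oplus H_{1}(U)$. A case-by-case check of the spherical space form groups then shows the only nontrivial $\Gamma$ with $\Gamma^{\mathrm{ab}}\cong G\oplus G$ are the binary dihedral groups $D^{*}_{2n}$ and the binary icosahedral group --- both subgroups of $SU(2)$ --- and for precisely these groups one computes $\eta_{S}(S^{3}/\Gamma)=\tfrac{8n^{2}+1}{12n}>\tfrac{2}{3}$ or $\tfrac{361}{180}>\tfrac{2}{3}$, which contradicts the Gauss--Bonnet/APS bound $\eta_{S}<\tfrac{2}{3}$. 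Without this reduction of $\Gamma$ your final curvature-integral step has nothing to grab onto. A smaller issue: your assertion $\pi_{1}(U)=\Gamma$ is false in general for ALE manifolds (Eguchi--Hanson has $\Gamma=\mathbb{Z}_{2}$ at infinity but is simply connected); fortunately the paper's argument never needs $b_{1}(U)=0$, only $\chi(U)\leq 1$ when $b_{2}(U)=0$, which follows from $\chi(U)=1-b_{1}-b_{3}$.
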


\bigskip

We split the proof into the following lemmas.
\\

\begin{Lemma_6_4}
Let $N$ be the closed manifold in the statement of Theorem \ref{Topology}. Let $U\subset N$ be an connected open subset such that $\partial U\cong \mathbb{S}^3/\Gamma$ and $H_1(U,\partial U)=0$, where $\Gamma$ is a finite group. Then the following hold.
\begin{eqnarray}
H_1(\partial U)&=&H_1(U)\oplus H_1(U), \label{H_1_split}
\\
H_2(U)&=&0. \label{betti}
\end{eqnarray}
\end{Lemma_6_4}

\begin{proof}
By Poincar\'{e} duality, we have
\begin{eqnarray*}
H^2(N;\mathbb{Z})\cong H_2(N)=0.
\end{eqnarray*}
By the universal coefficient theorem, we have
\begin{eqnarray*}
0=H^2(N;\mathbb{Z})\cong\operatorname{Hom}(H_2(N),\mathbb{Z})\oplus \operatorname{Ext}(H_1(N),\mathbb{Z}),
\end{eqnarray*}
which implies that $H_1(N)$ is torsion free, so henceforth we may assume
\begin{eqnarray} \label{Torsion_free}
H_1(N)\cong \mathbb{Z}^d,
\end{eqnarray}
where $d\geq 0$. By Poincar\'{e} duality and by the universal coefficient theorem again, we have
\begin{eqnarray*}
H_2(\partial U)\cong H^1(\partial U)\cong \operatorname{Hom}(H_1(\partial U),\mathbb{Z})=0,
\end{eqnarray*}
where the last equality is because $H_1(\partial U)$ is a finite abelian group and hence purely torsion. Then we have the following Mayer-Vietoris sequence
\begin{eqnarray*}
0= H_2(\partial U)\rightarrow H_2(U)\oplus H_2(N\backslash U)\rightarrow H_2(N)=0,
\end{eqnarray*}
from whence we obtain
\begin{eqnarray} \label{H_2(N)=0}
H_2(U)=H_2(N\backslash U)=0.
\end{eqnarray}
\\

On the other hand, we consider the long exact sequence
\begin{eqnarray} \label{Long_exact}
0=H_2(U)\rightarrow H_2(U,\partial U)\rightarrow H_1(\partial U)\rightarrow H_1(U)\rightarrow H_1(U,\partial U)=0.
\end{eqnarray}
It follows that $H_1(U)$ is purely torsion since the third homomorphism above is surjective and $H_1(\partial U)$ is finite. Hence by the universal coefficient theorem and by Poincar\'{e}-Lefschetz duality we have
\begin{eqnarray*}
H_2(U,\partial U)\cong H^2(U)\cong \operatorname{Hom}(H_2(U),\mathbb{Z})\oplus\operatorname{Ext}(H_1(U),\mathbb{Z})\cong H_1(U),
\end{eqnarray*}
where the last isomorphism follows from the facts that $H_1(U)$ is purely torsion and (\ref{H_2(N)=0}). Hence (\ref{Long_exact}) is simplified as
\begin{eqnarray} \label{Long_exact_1}
0\rightarrow H_1(U)\rightarrow H_1(\partial U)\rightarrow H_1(U)\rightarrow 0.
\end{eqnarray}
To see (\ref{Long_exact_1}) splits, we consider the following Mayer-Vietoris sequence
\begin{eqnarray*}
0=H_2(N)\rightarrow H_1(\partial U)\rightarrow H_1(U)\oplus H_1(N\backslash U)\rightarrow H_1(N)\cong \mathbb{Z}^d,
\end{eqnarray*}
where we have used (\ref{Torsion_free}). If we write $H_1(U)\oplus H_1(N\backslash U)\cong H_1(U)\oplus T\oplus \mathbb{Z}^e$, where $T$ is the torsion part of $H_1(N\backslash U)$, since $H_1(\partial U)$ is purely torsion, we have that the image of the second homomorphism in the above sequence is in $H_1(U)\oplus T\oplus\{0\}$, whose image under the third homomorphism is $0$. Hence we can simplify the above sequence as
\begin{eqnarray*}
0\rightarrow H_1(\partial U)\rightarrow H_1(U)\oplus T\rightarrow 0.
\end{eqnarray*}
The inclusion $H_1(U)\hookrightarrow H_1(U)\oplus T\cong H_1(\partial U)$ gives a homomorphism $H_1(U)\rightarrow H_1(\partial U)$, whose composition with the third homomorphism in (\ref{Long_exact_1}) is the identity on $H_1(U)$. It follows that (\ref{Long_exact_1}) splits and we have completed the proof.

\end{proof}

\bigskip

\begin{Lemma_6_1} \label{Lemma_6_1}
Let $\mathbb{S}^3/\Gamma$ be a round space form, where $\Gamma$ is a finite group. If $H_1(\mathbb{S}^3/\Gamma)\cong G\oplus G$, for some group $G$, then either $\Gamma$ is the binary dihedral group $D^*_n$ with $n$ being even, or $\Gamma$ is the binary icosahedral group with order $120$.
\end{Lemma_6_1}

\begin{proof}

We shall check every possible group $\Gamma$.
\\

\begin{enumerate}[(a)]
\item \textbf{Lens space:} In this case $H_1(\mathbb{S}^3/\Gamma)\cong\Gamma=\mathbb{Z}_m$ with $m\geq 2$, which is not possible.

\item \textbf{Prism manifold:} In this case the fundamental group has the presentation
  \begin{eqnarray*}
  \langle x,y|xyx^{-1}=y^{-1},x^{2^k}=y^n\rangle\times \mathbb{Z}_m,
  \end{eqnarray*}
  where $k$, $m\geq 1$, $n\geq 2$, and $m$ is coprime to $2n$. Its abelianization is
  \begin{eqnarray*}
  H_1(\mathbb{S}^3/\Gamma)\cong\langle x,y|y=y^{-1},x^{2^k}=y^n\rangle\times \mathbb{Z}_m,
  \end{eqnarray*}
  where $y^2=1$. We have that $H_1(\mathbb{S}^3/\Gamma)\cong \mathbb{Z}_2\times\mathbb{Z}_{2^k}\times \mathbb{Z}_m$ in the case $n$ is even, and that $H_1(\mathbb{S}^3/\Gamma)\cong \mathbb{Z}_{2^{k+1}}\times \mathbb{Z}_m$ in the case $n$ is odd. Since $m$ is coprime to $2n$, we have that the only possible case is when $m=1$, $n$ is even, and $k=1$.

\item \textbf{Tetrahedral manifold:} In this case we have
  \begin{eqnarray*}
  \Gamma=\langle x,y,z|(xy)^2=x^2=y^2,zxz^{-1}=y,zyz^{-1}=xy,z^{3^k}=1\rangle\times \mathbb{Z}_m,
  \end{eqnarray*}
  where $k$, $m\geq 1$ and $m$ is coprime to $6$. Then we have
  \begin{eqnarray*}
  H_1(\mathbb{S}^3/\Gamma)&\cong&\langle x,y,z|x^2=y^2=1,x=y,y=xy,z^{3^k}=1\rangle\times \mathbb{Z}_m
  \\
  &=&\langle x,z|x^2=1,x=x^2,z^{3^k}=1\rangle\times \mathbb{Z}_m=\mathbb{Z}_{3^k}\times\mathbb{Z}_m.
  \end{eqnarray*}
  Since $m$ is coprime to $6$, this case is not possible.

\item \textbf{Octahedral manifold:} In this case we have
  \begin{eqnarray*}
  \Gamma=\langle x,y|(xy)^2=x^3=y^4\rangle\times \mathbb{Z}_m,
  \end{eqnarray*}
  where $m$ is coprime to $6$. Then we have
  \begin{eqnarray*}
  H_1(\mathbb{S}^3/\Gamma)\cong\langle x,y|x=y^2=x^2\rangle\times \mathbb{Z}_m=\mathbb{Z}_2\times\mathbb{Z}_m.
  \end{eqnarray*}
  Since $m$ is coprime to $6$, this case is not possible.

\item \textbf{Icosahedral manifold:} In this case we have
  \begin{eqnarray*}
  \Gamma=\langle x,y|(xy)^2=x^3=x^3y^5\rangle\times \mathbb{Z}_m,
  \end{eqnarray*}
  where $m$ is coprime to $30$. Then we have
  \begin{eqnarray*}
  H_1(\mathbb{S}^3/\Gamma)&\cong&\langle x,y|x=y^2,x^2=y^3\rangle\times\mathbb{Z}_m
  \\
  &=&\langle x,y|x=y^2,x^2=y^3,y=1\rangle\times\mathbb{Z}_m=\mathbb{Z}_m.
  \end{eqnarray*}
  The only possibility is $m=1$.
\end{enumerate}

\end{proof}

\bigskip

We still need to consider the two cases when $\Gamma$ is the binary dihedral group $D^*_{2n}$ or the binary icosahedral group. In both cases $\Gamma$ can be embedded in $SU(2)$. Indeed, it is well known that the binary dihedral, tetrahedral, octahedral, and icosahedral groups are all finite subgroups of $SU(2)$; see \cite{kronheimer1989construction}.
\\

\begin{Lemma_6_2} \label{gamma_embedded}
Let $\mathbb{S}^3/\Gamma$ be the spherical space form with $\Gamma< O(4)$ being either the binary dihedral group $D^*_{2n}$ or the binary icosahedral group. Then there exists a complex structure on $\mathbb{R}^4$ such that $\Gamma<SU(2)$
\end{Lemma_6_2}

\bigskip

\begin{Lemma_6_3} \label{gauss_bonnet}
Let $(U,g)$ be an Einstein ALE space which is asymptotic to $\mathbb{S}^3/\Gamma$, where $\Gamma< SU(2)$ is isomorphic to the binary dihedral group $D^*_{2n}$ or to the binary icosahedral group. Then $b_2(U)\geq 1$.
\end{Lemma_6_3}
\begin{proof}
Assume that $b_2(U)=0$. Then we have $\chi(U)=1-b_1(U)-b_3(U)\leq 1$ and $\tau(U)=0$. Using the Chern-Gauss-Bonnet theorem and the Atiyah-Patodi-Singer index theorem, we have (see (4.4) and (4.5) in \cite{nakajima1990self})
\begin{eqnarray*}
1&\geq&\chi(U)=\frac{1}{8\pi^2}\int_U|W|^2dg+\frac{1}{|\Gamma|},
\\
0&=&\tau(U)=\frac{1}{12\pi^2}\int_U(|W^+|^2-|W^-|^2)dg-\eta_S(\mathbb{S}^3/\Gamma).
\end{eqnarray*}
Hence we have
\begin{eqnarray*}
\frac{2}{3}\geq\frac{2}{12\pi^2}\int_U|W^+|^2dg+\frac{2}{3|\Gamma|}+\eta_S(\mathbb{S}^3/\Gamma),
\end{eqnarray*}
which implies
\begin{eqnarray*}
\eta_S(\mathbb{S}^3/\Gamma)\leq\frac{2}{3}\left(1-\frac{1}{|\Gamma|}\right)<\frac{2}{3}.
\end{eqnarray*}
On the other hand, by \cite{nakajima1990self}, if $\Gamma$ is the binary dihedral group $D^*_{2n}$, we have
\begin{eqnarray*}
\eta_S(\mathbb{S}^3/\Gamma)=\frac{2(2n+2)^2-8(2n+2)+9}{6\cdot2n}=\frac{8n^2+1}{12n}>\frac{2}{3}.
\end{eqnarray*}
Similarly, if $\Gamma$ is the binary icosahedral group, then we have
\begin{eqnarray*}
\eta_S(\mathbb{S}^3/\Gamma)=\frac{361}{180}>\frac{2}{3}.
\end{eqnarray*}
In either case we yield a contradiction.

\end{proof}

\bigskip

\begin{proof} [Proof of Theorem \ref{Topology}]

\newtheorem{Claim_H1}{Claim}

Let $N$ be the manifold in Theorem \ref{Topology} and $U\subset N$ be a connected open subset that admits an Einstein ALE metric.
\\

\begin{Claim_H1}
\begin{eqnarray*}
H_1(U,\partial U)=0.
\end{eqnarray*}
\end{Claim_H1}

\begin{proof}[Proof of the claim]
Suppose the claim does not hold. We first show that the boundary $\partial \tilde{U}$ of the universal cover $\tilde{U}$ has more than one component. Since $\partial U$ is a deformation retraction of its collar neighbourhood, by excision we have
\begin{eqnarray*}
H_1(U/\partial U,\partial U/\partial U)=H_1(U,\partial U)\neq 0.
\end{eqnarray*}
Hence we have $\pi_1(U/\partial U)\neq 0$. Let $\gamma_0$ be a loop in $U/\partial U$ based at $\partial U/\partial U$ that is not null-homotopic. Lifting this loop to $U$ by the quotient map $q:U\rightarrow U/\partial U$, we obtain a curve $\gamma$ in $U$, whose ends lie in $\partial U$. By using the universal covering map $p:\tilde{U}\rightarrow U$, we can lift $\gamma$ to $\tilde{\gamma}$, a curve in $\tilde{U}$ whose ends lie in $\partial \tilde{U}$. If $\partial\tilde{U}$ is connected, since $\tilde{U}$ is simply connected, we have that $\tilde{\gamma}$ is homotopic to a curve that lies in $\partial\tilde{U}$. Composing this homotopy with $q\circ p$ we obtain a homotopy between $\gamma_0$ with a point; this is a contradiction. Hence $\partial \tilde{U}$ has more than one component.
\\

Next we observe that if $U$ admits an Einstein ALE metric, then we can lift this metric to $\tilde{U}$, which has more than one end. By Cheeger-Gromoll's splitting theorem, $\tilde{U}$ splits as the product of a line and a Ricci flat three-manifold; hence this metric is flat, a contradiction.
\end{proof}
\bigskip

We continue the proof of Theorem \ref{Topology}. By (\ref{betti}) we have that $b_2(U)=0$. On the other hand, combining (\ref{H_1_split}) and Lemma \ref{Lemma_6_1} we have that $\partial U\cong\mathbb{S}^3/\Gamma$, where $\Gamma$ is either the binary dihedral group $D^*_{2n}$ or the binary icosahedral group. It follows from Lemma \ref{gauss_bonnet} that $b_2(U)\geq 1$, and we obtain a contradiction.
\end{proof}

\bigskip

\textbf{Acknowledgement.} The author would like to thank his doctoral advisors, Professor Bennett Chow and Professor Lei Ni, for their constant help. He also owes many thanks to Professor Richard Bamler, who provided him with the personal notes appeared in section 6. The idea of this paper was inspired by a discussion between Professor Bennett Chow and the author.

\bibliographystyle{plain}
\bibliography{citation}

\noindent Department of Mathematics, University of California, San Diego, CA, 92093
\\ E-mail address: \verb"yoz020@ucsd.edu"

\end{document}